\definecolor{dred}{rgb}{0.85,0.00,0.00}
\definecolor{dgreen}{rgb}{0.00,0.49,0.00}
\definecolor{dblue}{rgb}{0,0.08,1.00}
\newtheorem{theorem}{Theorem}[section]
\newtheorem{lemma}[theorem]{Lemma}
\newtheorem{proposition}[theorem]{Proposition}
\newtheorem{remark}[theorem]{Remark}
\newtheorem{corollary}[theorem]{Corollary}
\newtheorem{observation}[theorem]{Observation}
\def\RR{{\mathbb R}}
\def\NN{{\mathbb N}}
\newcommand\sV{\mathbf{V}}
\newcommand\sP{\mathscr{P}}
\newcommand\calV{{\mathbf{V}}}
\def\sH{{H}}
\def\d{\mathrm{d}}
\def\eps{\varepsilon}
\title{Unrestricted iterations of relaxed projections in Hilbert space: Regularity, absolute convergence, and statistics of displacements}
\author{C.~Sinan G\"unt\"urk\footnote{Courant Institute, NYU, \tt{gunturk@cims.nyu.edu}.}~ and  Nguyen T. Thao\footnote{City College, CUNY, \tt{tnguyen@ccny.cuny.edu}.}}
\date{January 22, 2019}
\begin{document}
\addtolength{\baselineskip}{2pt} 

\maketitle

\begin{abstract}
Given a finite collection $\sV:=(V_1,\dots,V_N)$ of closed linear subspaces of a real Hilbert space $\sH$, let $P_i$ denote the orthogonal projection operator onto $V_i$ and $P_{i,\lambda}:= (1-\lambda)I + \lambda P_i$ denote its relaxation with parameter $\lambda \in [0,2]$, $i=1,\dots,N$.
Under a mild regularity assumption on $\sV$ known as ``innate regularity'' (which, for example, is always satisfied if each $V_i$ has finite dimension or codimension), we show that all trajectories $(x_n)_{0}^\infty$ resulting from the iteration $x_{n+1} := P_{i_n,\lambda_n}(x_n)$,
where the $i_n$ and the $\lambda_n$ are unrestricted other than the assumption that $\{\lambda_n : n \in \NN\} \subset [\eta,2{-}\eta]$ for some $\eta \in (0,1]$,
possess uniformly bounded displacement moments of arbitrarily small orders. In particular, we show that
$$ \sum_{n=0}^\infty \|x_{n+1} - x_n \|^\gamma \leq C \|x_0\|^\gamma ~\mbox{ for all }~ \gamma > 0,$$
where $C:=C(\sV,\eta,\gamma)<\infty$. This result strengthens prior results on norm convergence of these trajectories, known to hold under the same regularity assumption. For example, with $\gamma=1$, it follows that the displacements series $\sum (x_{n+1}-x_n)$ converges absolutely in $\sH$.

Quantifying the constant $C(\sV,\eta,\gamma)$,
we also derive an effective bound on the distribution function of the norms of the displacements (normalized by the norm of the initial condition) which yields a root-exponential type decay bound on their decreasing rearrangement, again uniformly for all trajectories.
\end{abstract}

\section{Introduction}

Starting with the Kaczmarz method \cite{Kaczmarz37} and its many variations that have followed, projection algorithms have been employed extensively in convex feasibility problems, in particular linear inverse problems. The literature is highly mature with excellent texts and review articles; see, for example, \cite{Bauschke96,combettes1996convex,Deutsch01,cegielski2012iterative,bauschke2017convex}. 

Consider a real Hilbert space $\sH$ and a finite collection $\sV:=(V_1,\dots,V_N)$ of closed linear subspaces. For each $i\in [N]:=\{1,\dots,N\}$, let $P_i:\sH \to V_i$ be the orthogonal projection operator onto $V_i$, and for each $\lambda\in[0,2]$, let $P_{i,\lambda}:\sH \to \sH$ be its relaxation defined by 
\begin{equation}\label{relaxedPi}
P_{i,\lambda} (x):= (1-\lambda)x + \lambda P_i (x), ~~x \in \sH. 
\end{equation}
We will be concerned with iterations of relaxed projections chosen arbitrarily from the collection
\begin{equation}\label{P-collection}
\sP:=\sP(\sV, \eta) := \Big \{P_{i,\lambda} : 1\leq i\leq N, \lambda \in [\eta,2-\eta] \Big \},~~ 0< \eta \leq 1. 
\end{equation}
Specifically, for each sequence $(P_{i_n,\lambda_n})_0^\infty$ in 
$\sP$ and starting point $x_0 \in \sH$, we define a trajectory $(x_n)_0^\infty$ in $\sH$ via the iteration
\begin{equation}\label{alg}
x_{n+1}:=P_{i_n,\lambda_n}(x_n),\quad n\geq 0.
\end{equation}

The $i_n$ define the so-called ``control sequence'' of the algorithm, and the $\lambda_n$ are called relaxation coefficients. In practice the control sequence may be periodic (cyclic), quasi-periodic, stochastic, or greedily determined based on some criterion, such as maximization of $\|x_n - P_i(x_n)\|$, but there has also been significant interest in unrestricted (arbitrary) control sequences (also called {\em random} or {\em chaotic control}), which is the setting of this paper.

The best known special case of \eqref{alg} involves alternating between two subspaces $V_1$ and $V_2$, with no relaxation (i.e., $\lambda_n = 1$ for all $n$). In this case, von Neumann's celebrated theorem \cite{von1950functional} says that $x_n$ converges (in norm) to the orthogonal projection of $x_0$ onto $V_1 \cap V_2$. This was extended to general $N$ in \cite{halperin1962product} for cyclic control, and later in \cite{sakai1995strong} for quasi-periodic control. 

For unrestricted iterations the situation is more complicated. In \cite{prager1960uber} norm convergence was shown to hold in finite dimensional spaces. (It was generalized in \cite{aharoni1989block} to include relaxation and convex combinations of projections.) In general Hilbert spaces, weak convergence was shown in  \cite{amemiya1965convergence} and norm convergence was proposed. This question remained unresolved for a long time, and was only answered recently, in the negative: One can find systems $\sV=(V_1,V_2,V_3)$ such that for all nonzero initial points $x_0$, norm convergence fails for some control sequences; see \cite{kopecka2014product,kopecka2017strange}.

Nevertheless, norm convergence has been shown to hold in general Hilbert spaces under mild regularity assumptions on $\sV$ (also called angle criteria); see e.g. \cite{bauschke1995norm, bauschke2001projection, pustylnik2012convergence, oppenheim2018angle}. 
In this paper, we will work with the assumption of {\em innate regularity} which was introduced in \cite{bauschke1995norm}. This concept is defined for general convex subsets, but for linear subspaces it reduces to a rather simple form: A collection $\sV=(V_1,\dots,V_N)$ is innately regular if and only if the complementary angle between $\bigcap_{i\in I}V_i$ and $\bigcap_{i\in J}V_i$ is nonzero for all subsets $I,J\subset[N]$. As a special but important case, any $\sV$ for which each $V_i$ is either finite dimensional or finite codimensional is innately regular. (For these facts, see Section \ref{angular_characterization}.)

Under the assumption of innate regularity, \cite{bauschke1995norm} showed norm convergence of unrestricted iterations of relaxed projections. In a sense, this is the best possible kind of result we can have because unlike cyclic control (or its variants where indices appear with some frequency), it is not possible to obtain any effective convergence rate guarantee for unrestricted iterations once $N\geq 3$ (even in finite dimensions), because one can adversarially slow down the speed of convergence by introducing arbitrarily long gaps for any chosen index $i$ while cycling through the remaining indices.

Nevertheless, there is still room for qualitative improvements. We show in this paper that the displacements (increments) of the resulting trajectories have bounded moments of all orders.  Our main result is the following:

\begin{theorem}\label{theo1}
Let $\calV=(V_1,\dots,V_N)$ be an innately regular collection of closed linear subspaces in a real Hilbert space $\sH$, $\eta\in(0,1]$. Let $\sP:=\sP(\calV,\eta)$ be defined as in \eqref{P-collection} and $\gamma > 0$ be arbitrary. There exists a constant $C=C(\calV, \eta,\gamma)<\infty$ such that for all $x_0\in\sH$ and all sequences of relaxed projections $(P_{i_n,\lambda_n})_0^\infty$ in 
$\sP$, the trajectory $(x_n)_0^\infty$ defined by \eqref{alg} satisfies 
$$\sum_{n=0}^\infty\|x_{n+1}-x_{n}\|^\gamma \leq C\,\|x_0\|^\gamma.$$
\end{theorem}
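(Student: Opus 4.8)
The plan is to split the range of $\gamma$: an elementary energy estimate settles $\gamma\ge 2$ outright, while the delicate range $0<\gamma<2$ is handled by an induction on the number $N$ of subspaces. First I would record the one–step identity. Writing $q_n:=x_n-P_{i_n}(x_n)\in V_{i_n}^\perp$, so that $x_{n+1}-x_n=-\lambda_n q_n$ and $\|x_{n+1}-x_n\|=\lambda_n\|q_n\|$, the relation $\langle x_n,q_n\rangle=\|q_n\|^2$ gives
\[
\|x_{n+1}\|^2=\|x_n\|^2-\lambda_n(2-\lambda_n)\|q_n\|^2 .
\]
Since $\lambda_n\in[\eta,2-\eta]$ forces $\lambda_n(2-\lambda_n)\ge\eta(2-\eta)$, the sequence $(\|x_n\|)$ is nonincreasing, and from $\lambda_n^2\le\frac{2-\eta}{\eta}\,\lambda_n(2-\lambda_n)$ summation yields $\sum_n\|x_{n+1}-x_n\|^2\le \frac{2-\eta}{\eta}\|x_0\|^2$. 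Combined with the pointwise bound $\|x_{n+1}-x_n\|=\lambda_n\|q_n\|\le 2\|x_n\|\le 2\|x_0\|$, the interpolation $\sum_n\|x_{n+1}-x_n\|^\gamma\le(\sup_n\|x_{n+1}-x_n\|)^{\gamma-2}\sum_n\|x_{n+1}-x_n\|^2$ disposes of every $\gamma\ge 2$ with an explicit $C(\eta,\gamma)$. I also note that every displacement lies in $V^\perp$, where $V:=\bigcap_i V_i$, so $P_V x_n$ is frozen and the dynamics of $(I-P_V)x_n$ is autonomous; replacing $x_0$ by $(I-P_V)x_0$ I may assume $V=\{0\}$, the sequence then being Fej\'er–monotone with respect to $\{0\}$.

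For $0<\gamma<2$ the plan is to induct on $N$, the main device being a decomposition of the trajectory into \emph{sweeps}. Set $T_0:=0$ and let $T_{j+1}$ be the first time after $T_j$ at which every index of $[N]$ has been used since $T_j$; by minimality the steps strictly before the last one of the $j$-th sweep use only a proper subset $S_j\subsetneq[N]$. The subcollection $(V_i)_{i\in S_j}$ is again innately regular, so the induction hypothesis bounds the displacement sum over those steps by $C_{N-1}\|x_{T_j}\|^\gamma$, while the single remaining step contributes at most $2^\gamma\|x_{T_j}\|^\gamma$; hence $\sum_{T_j\le n<T_{j+1}}\|x_{n+1}-x_n\|^\gamma\le(C_{N-1}+2^\gamma)\|x_{T_j}\|^\gamma$. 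The base case $N=1$ is immediate, since repeating a single index gives $q_{n+1}=(1-\lambda_n)q_n$ and hence geometric decay of the displacements. If only a proper subset is used from some time on, the infinite tail is handled directly by the induction hypothesis; otherwise there are infinitely many complete sweeps and everything reduces to the following contraction estimate, which I would isolate as the main lemma: there is $c=c(\calV,\eta)\in(0,1)$ with $\|x_{T_{j+1}}\|\le\sqrt{c}\,\|x_{T_j}\|$ for every complete sweep. Granting it, $\|x_{T_j}\|\le c^{j/2}\|x_0\|$ and summing the per–sweep bounds over $j$ gives a convergent geometric series, closing the induction with $C_N=(C_{N-1}+2^\gamma)/(1-c^{\gamma/2})$.

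The hard part is precisely this per–sweep contraction, and it is where innate regularity enters in full strength. The naive attempt — pick an index $i^{*}$ that $x_{T_j}$ is far from (possible by linear regularity, $\mathrm{dist}(x,V)\le\kappa\max_i\mathrm{dist}(x,V_i)$) and charge a definite energy drop to the step using $i^{*}$ — runs into the \emph{stalling} phenomenon: between the start of the sweep and the first use of $i^{*}$ the iterate may drift, through many tiny–displacement steps, close to $V_{i^{*}}$, so the decisive step dissipates almost nothing. The real obstacle is therefore to show that a sweep dissipating little energy cannot drift, i.e.\ that small $\|x_{T_j}\|^2-\|x_{T_{j+1}}\|^2$ forces the iterate to remain near $V_i$ for every $i$ simultaneously, hence near $V=\{0\}$, contradicting a lower bound on $\|x_{T_j}\|$. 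Controlling the \emph{net} drift — as opposed to the path length, which genuinely may be unbounded — is the crux, and this is exactly the point at which the nonvanishing of the complementary angle between \emph{every} pair $\bigcap_{i\in I}V_i$, $\bigcap_{i\in J}V_i$ (rather than mere linear regularity of the full family) and the separation $\lambda_n\in[\eta,2-\eta]$ are indispensable.

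An equivalent and perhaps more robust route, matching the statistical language of the title, is to bound the distribution function $N(t):=\#\{n:\|x_{n+1}-x_n\|>t\|x_0\|\}$ directly by a power of $\log(1/t)$, equivalently to prove root–exponential decay of the decreasing rearrangement of the displacements. Since the layer–cake formula gives
\[
\sum_{n}\|x_{n+1}-x_n\|^\gamma=\gamma\,\|x_0\|^\gamma\!\int_0^{2}t^{\gamma-1}N(t)\,dt ,
\]
any such sub–polynomial bound on $N(t)$ yields the conclusion simultaneously for all $\gamma>0$. I expect the two viewpoints to be two faces of the same mechanism — the sweep contraction encodes how the iterate descends through dyadic norm scales, and the stalling that obstructs a uniform rate is exactly what turns a would–be geometric decay of the rearrangement into a root–exponential one.
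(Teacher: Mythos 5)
Your outer architecture is sound and, in fact, runs closely parallel to the paper's own proof: the paper also proceeds by induction on the number of distinct indices active in a segment (its statements $\mathbf{P}(\ell)$), also decomposes the trajectory into stretches on which only a proper subset of indices appears (handled by the induction hypothesis), separated by single steps at which a definite relative norm contraction occurs, and also closes the induction by summing the resulting geometric series; your recursion $C_N=(C_{N-1}+2^\gamma)/(1-c^{\gamma/2})$ has exactly the same shape as the paper's $C_{\ell+1}=C_\ell+\bigl(C_\ell+(2-\eta)^\gamma\bigr)/(1-\beta_*^\gamma)$. Your energy estimate for $\gamma\geq 2$, the reduction to $\bigcap_i V_i=\{0\}$, and the bookkeeping of sweeps are all correct, and your ``main lemma'' (per-sweep contraction $\|x_{T_{j+1}}\|\leq\sqrt{c}\,\|x_{T_j}\|$) is in fact a true statement under innate regularity.

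The genuine gap is the one you name yourself: the per-sweep contraction is asserted, its obstruction (stalling/drift with negligible dissipation) is correctly diagnosed, and then it is left unproven. That lemma \emph{is} the theorem; everything else in the proposal is routine. The reason your attempts stall is that you try to control drift in the norm metric, where the only available estimate (drift $\lesssim\sqrt{m}\cdot\sqrt{\text{dissipation}}$ by Cauchy--Schwarz) degenerates as the sweep length $m$ grows. The missing idea, which is the entire content of Section \ref{geometry} of the paper, is to control drift through the \emph{relative} distances $\theta_V(x):=d(x,V)/\|x\|$ of \eqref{rho-def} instead. Two facts do all the work: (a) the monotonicity of Lemma \ref{prop1} --- if $V\subset W$ then $\theta_V(P_{W,\lambda}x)\leq\theta_V(x)$ for every $\lambda\in[0,2]$, so no amount of iterating projections onto supersets of $V$ can increase the relative distance to $V$; and (b) the regularity propagation of Corollary \ref{corr-update}, $\theta_{V\cap W}(x)\leq\kappa(V,W)\max\bigl(\theta_V(x),\theta_W(x)\bigr)$, which is where innate regularity enters in full strength, since it must be applied to the accumulated intersections $V_{I_k}$ rather than to the original $V_i$ only. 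Chaining (a) and (b) along the trajectory gives the growth lemma (Lemma \ref{growth-lemma}): $\theta_{I_n}(x_{n+1})\leq\kappa_*^{|I_n|}\max_{0\leq k\leq n}\theta_{i_k}(x_k)$, where $I_n$ is the set of indices used so far and $\kappa_*$ is as in \eqref{kappa_sV}. Your sweep lemma then falls out: after your reduction, a complete sweep has $V_{I_n}=\{0\}$, hence $\theta_{I_n}(x_{n+1})=1$ whenever $x_{n+1}\neq 0$ (and the contraction is trivial otherwise), so the growth lemma forces $\theta_{i_k}(x_k)\geq\kappa_*^{-N}$ at some step $k$ of the sweep; at that step \eqref{frac-dec} gives $\|x_{k+1}\|\leq\bigl(1-\eta(2-\eta)\kappa_*^{-2N}\bigr)^{1/2}\|x_k\|$, and monotonicity of the norms yields the per-sweep contraction with $c=1-\eta(2-\eta)\kappa_*^{-2N}$. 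In short: the quantity that cannot drift is not the iterate itself but its vector of relative distances, which can only be inflated by the fixed factor $\kappa_*$ each time a \emph{new} index enters the sweep, no matter how many steps intervene --- that is precisely the drift control you were missing.
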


The case $\gamma = 2$ is well-known (see, e.g. \cite{Bauschke96}); it is a fundamental ingredient of the asymptotic regularity property of the trajectories and it holds without the innate regularity assumption on the subspaces (but under the assumption that $\mathrm{limsup}~ \lambda_k < 2$). The strength of Theorem \ref{theo1} starts with $\gamma = 1$ because it goes beyond the norm convergence result known to hold for an innately regular $\sV$ 
and shows, in addition, that all trajectories fall into a ball within a proper subspace of convergent sequences in $\sH$, namely the space 
$$ \mathrm{bv}(\NN,\sH):=\left \{ f: \NN \to \sH : \sum_{n=0}^\infty \|f(n+1)-f(n)\| < \infty \right \}$$
of bounded variation functions from $\NN$ to $\sH$. This stronger sense of convergence is sometimes called {\em absolute convergence}, in analogy with the more common use of the term for series \cite{knopp1956infinite}. It simply amounts to saying that the displacements series
$$ x_0+\sum_{n=0}^\infty (x_{n+1} - x_n)$$
converges {\em absolutely} (to $\lim x_n$).

As $\gamma$ is decreased towards $0$, the strength of Theorem \ref{theo1} goes significantly beyond ensuring bounded total variation of the trajectories.
Quantifying the constant $C(\sV,\eta,\gamma)$ across all $0 < \gamma < \infty$, we also derive an effective bound on the distribution function of the norms of the displacements (see Proposition \ref{S-bound-lem}) and show that, despite the lack of possibility of establishing any effective convergence rate that holds uniformly for all trajectories, the $n$th largest displacement is bounded by $c \exp(-\rho n^{1/N})$ uniformly for all trajectories, i.e. the constants $c$ and $\rho$ only depend on $\sV$ and $\eta$ (see Theorem \ref{rearrangement-bound}). 

The paper is organized as follows: In Section \ref{angle}, we review the notion of angle between subspaces and its connection to the notion of innate regularity. Section \ref{geometry}, which is at the heart of the paper, is devoted to geometric properties of successive relaxed projections for innately regular subspaces which will be needed in our proof of Theorem \ref{theo1} given in Section \ref{proof-of-theo1}. Section \ref{statistics} is devoted to the statistical analysis of the displacements, and in particular, on the derivation of the aforementioned decay bound on the decreasing rearrangements of the displacements. 

\section{Angle between subspaces} \label{angle}

We start by recalling the notion of (complementary) angle between two subspaces introduced in \cite{friedrichs1937certain}; see \cite{Deutsch01} for a detailed discussion. Given two subspaces $V$ and $W$ of a Hilbert space $\sH$, the angle between $V$ and $W$ is defined to be the unique number $\varphi(V,W)\in[0,\frac{\pi}{2}]$ such that
\begin{equation}\label{angle-def}
\cos\varphi(V,W)=\sup\Big\{|\langle v,w\rangle|:v\in V\cap (V{\cap} W)^\perp, ~w\in W\cap (V{\cap} W)^\perp,~\|v\|\leq1~\mbox{and}~\|w\|\leq1\Big\}. 
\end{equation}

We note that there are some variations of this definition. Some authors restrict the test vectors $v$ and $w$ in \eqref{angle-def} to be of unit norm which requires the exclusion of the case of nested subspaces. Meanwhile, some authors allow for nested subspaces, but in this case separately set the angle between them to be $0$. Our choice for the definition of angle, as implied by \eqref{angle-def}, produces the value $\pi/2$ for nested subspaces (including the case $V=W$). This apparent discontinuity may seem counter-intuitive. However, there is also an intrinsic discontinuity in the problem we are considering in this paper: Both the limit of $x_n$ defined by \eqref{alg} and the associated total variation (the path length) $\sum \|x_{n+1} - x_n\|$ are discontinuous functions of $\sV$. This is most easily seen by considering alternating projections between two lines $\ell_1$ and $\ell_2$ in $\RR^2$ separated by an angle $\theta$. As we let $\theta \to 0^+$, $\lim x_n$ remains fixed at the origin while the path length blows up, but when $\ell_1=\ell_2$, $\lim x_n$ becomes the orthogonal projection of $x_0$ on $\ell_1$ and the total variation becomes finite. 

It follows from the discussion in the preceding paragraph and finite dimensional linear algebra that the angle between finite dimensional subspaces is always nonzero. However, the angle between infinite dimensional subspaces could be zero. In general, we have the following characterization of positive angle (see \cite[Proposition 5.16]{Bauschke96} and \cite[Theorem 9.35]{Deutsch01}):
For any two closed subspaces $V$ and $W$ in $\sH$, 
\begin{equation}\label{equiv-positive-angle}
\varphi(V,W)>0\quad\iff\quad V^\perp+W^\perp~\mbox{is closed}\quad\iff\quad V+W~\mbox{is closed}.
\end{equation}

\subsection{Innate regularity and its angular characterization} \label{angular_characterization}

When we have several subspaces in $\sV$, a very useful notion of angular separation for convergence of random projections turns out to be {\em innate regularity}. There are various levels of regularity applicable to general convex sets (see, e.g., \cite{bauschke1995norm,Bauschke96,bauschke2001projection}) but for subspaces they all boil down to a single notion also known as bounded linear regularity, which we will simply call regularity in this paper. Following \cite{bauschke1995norm}, a collection of subspaces $\sV=(V_1,\dots,V_n)$ is (boundedly linear) regular if there exists a constant $\kappa < \infty$ such that
\begin{equation}\label{linear_reg_dist}
d(x,V_1\cap \dots \cap V_N) \leq \kappa \max_i d(x,V_i) ~~~\mbox{ for all } x \in \sH,
\end{equation}
and {\em innately regular} if all of its (non-void) subcollections are regular. Here, $d(x,V)$ stands for the distance between $x \in \sH$ and the closed subspace $V$, also equal to $\|x - P_V x\|$ where $P_V$ is the orthogonal projection onto $V$. 

It is known that (see \cite[Theorem 5.19]{Bauschke96}) $\sV$ is regular if and only if  
$V_1^\perp + \cdots + V_N^\perp$ is closed. Therefore, as noted in \cite[Fact 3.2]{bauschke1995norm}), 
\begin{equation}\label{innate_1}
\sV \mbox{ is innately regular } \iff \sum_{i\in I}V_i^\perp \mbox{  is closed for all } I\subset[N]. 
\end{equation}
Here we take the sum over the empty collection to be the trivial (zero) subspace. 

For any $I\subset [N]$, let us use the notation 
\begin{equation}\label{V_I}
V_I:=\bigcap\limits_{i\in I}V_i 
\end{equation}
where we take $V_\emptyset:=\sH$. We identify $V_i$ with $V_{\{i\}}$. Hence with \eqref{equiv-positive-angle} we have
\begin{equation}\label{innate_2}
\sV \mbox{ is innately regular } \iff \varphi(V_I,V_J)>0 ~~\mbox{ for all } I,J \subset [N].
\end{equation}

As a special, but very important case, we note the following observation:
\begin{proposition}
Suppose that for every $i \in [N]$, $V_i$ has finite dimension or co-dimension. Then $\sV$ is innately regular.
\end{proposition}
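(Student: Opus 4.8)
The plan is to invoke the closedness characterization of innate regularity recorded in \eqref{innate_1}: it suffices to show that $\sum_{i\in I} V_i^\perp$ is closed for every $I \subset [N]$. The point is that under the hypothesis each individual $V_i^\perp$ is of an especially simple type — if $V_i$ is finite dimensional then $V_i^\perp$ is finite codimensional, while if $V_i$ is finite codimensional then $V_i^\perp$ is finite dimensional — and in both cases $V_i^\perp$ is closed. So I would split the index set as $I = I_1 \sqcup I_2$, where $I_1$ collects those $i$ with $V_i^\perp$ finite dimensional and $I_2$ collects those with $V_i^\perp$ finite codimensional (assigning arbitrarily any $i$ that qualifies for both, e.g.\ when $\sH$ is finite dimensional), and write $\sum_{i\in I} V_i^\perp = F + B$ with $F := \sum_{i\in I_1} V_i^\perp$ finite dimensional and $B := \sum_{i\in I_2} V_i^\perp$.

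The two elementary facts I would isolate are: (A) if $M$ is a closed subspace and $G$ is finite dimensional, then $M + G$ is closed; and (B) if $M$ is a closed subspace and $U$ is a closed finite codimensional subspace, then $M+U$ is closed. Fact (A) is standard — pass to the quotient $\sH/M$, note that the image of $G$ is a finite dimensional, hence closed, subspace of the Hilbert space $\sH/M$, and pull back under the continuous quotient map. Fact (B) I would reduce to (A): since $U$ has finite codimension in $\sH$, the subspace $M\cap U$ has finite codimension in $M$, so $M$ admits a finite dimensional algebraic complement $G$ of $M\cap U$ inside $M$; then $M + U = (M\cap U) + G + U = G + U$ because $M\cap U \subset U$, and (A), applied to the closed subspace $U$ and the finite dimensional $G$, shows $G+U$ is closed.

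With (A) and (B) in hand, the conclusion follows by a short induction on the number of summands: starting from the closed subspace $\{0\}$, each time we adjoin one more $V_i^\perp$ we remain closed — by (A) when $V_i^\perp$ is finite dimensional and by (B) when $V_i^\perp$ is finite codimensional. The order of adjunction is irrelevant, since at each step the running partial sum is merely required to be closed. Hence $\sum_{i\in I} V_i^\perp$ is closed for every $I \subset [N]$, which by \eqref{innate_1} is exactly the assertion that $\sV$ is innately regular.

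The step that deserves the most care — and the reason the statement is not a mere dimension count — is the subtlety hidden in Fact (B): a subspace of finite codimension need not be closed (the kernel of a discontinuous linear functional is a dense hyperplane), and an algebraic sum of closed subspaces need not be closed. In particular, one cannot conclude closedness of $B$ by observing that its orthogonal complement $B^\perp = \bigcap_{i\in I_2} V_i$ is finite dimensional, because that argument only controls the closure $\overline{B} = (B^\perp)^\perp$. The genuine content is to show that the algebraic sum already coincides with its closure, and this is precisely what the finite dimensional perturbation lemma (A), applied through the complement trick in (B), delivers.
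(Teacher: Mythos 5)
Your proof is correct, but it follows a genuinely different route from the paper's. You verify the sum-closedness characterization \eqref{innate_1} directly, by induction on the number of summands: each time you adjoin a $V_i^\perp$ to the (closed) running sum, you invoke your fact (A) (closed plus finite dimensional is closed) when $V_i^\perp$ is finite dimensional, and your fact (B) (closed plus closed finite-codimensional is closed) when $V_i^\perp$ is finite codimensional; your reduction of (B) to (A) via the finite-dimensional complement $G$ of $M\cap U$ inside $M$ is sound, as is the quotient-map proof of (A). The paper instead works with the angular characterization \eqref{innate_2}: for each pair $I,J\subset[N]$ it shows $\varphi(V_I,V_J)>0$ via \eqref{equiv-positive-angle}, noting that if both intersections $V_I$ and $V_J$ are finite dimensional then $V_I+V_J$ is finite dimensional and hence closed, while otherwise one of them has finite codimension, so $V_I^\perp+V_J^\perp$ is the sum of a closed subspace and a finite-dimensional one, which is closed by exactly your fact (A) (cited there from Deutsch rather than proved). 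The paper's route is shorter because passing to intersections first absorbs the dichotomy --- an intersection involving a finite-dimensional $V_i$ is finite dimensional, and an intersection of finite-codimensional subspaces is finite codimensional --- so only sums of two subspaces, one of them finite dimensional, ever arise, and nothing like your fact (B) is needed. What your route buys is self-containedness (both lemmas are proved rather than cited) and a direct verification of \eqref{innate_1} for every index set without passing through angles; your closing observation, that finite-dimensionality of $B^\perp$ only controls $\overline{B}=(B^\perp)^\perp$ and so cannot by itself yield closedness of the algebraic sum, correctly pinpoints why some such lemma is unavoidable.
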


\begin{proof}
Let $I,J\subset[N]$. If $V_I$ and $V_J$ both have finite dimension, then $V_I+V_J$, also having finite dimension, is closed. Otherwise, either $V_I$ or $V_J$ has finite co-dimension. Then $V_I^\perp+V_J^\perp$ is closed since the sum of a closed subspace and a finite dimensional subspace is always closed (see \cite[Lemma 9.36]{Deutsch01}). In either case, \eqref{equiv-positive-angle} yields $\varphi(V_I,V_J)>0$.
\end{proof}

\subsection{Quantifying regularity by means of angle} 

Consider two closed subspaces $V$ and $W$ of $\sH$. Since the collection $(V,W)$ is regular if and only if $\varphi(V,W) > 0$, it is natural to ask how the parameter $\kappa$ in \eqref{linear_reg_dist} is related to the angle $\varphi(V,W)$. While this specific relation will not be needed in this paper, the answer has a simple form which we note in the next proposition. 

\begin{proposition} \label{angle-regularity}
For any two closed subspaces $V$ and $W$ of $\sH$,
\begin{equation}
  d(x,V\cap W)~\sin \varphi(V,W) \leq d(x,V) + d(x,W) ~~\mbox{ for all } x \in \sH.
\end{equation}
In other words, for $N=2$, the constant $\kappa$ in \eqref{linear_reg_dist} can be chosen to be $2/\sin \varphi(V_1,V_2)$.
\end{proposition}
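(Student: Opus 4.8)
The plan is to reduce to the case $x\perp V\cap W$, encode $\|x\|^2$ through two angle‑governed Cauchy--Schwarz estimates, and finish with a short trigonometric inequality. Write $M:=V\cap W$, $c:=\cos\varphi(V,W)$, and let $P_V,P_W,P_M$ be the orthogonal projections. First I would replace $x$ by $z:=x-P_M x\in M^\perp$: since $M\subseteq V$ and $M\subseteq W$ give $P_V P_M=P_M=P_W P_M$, one gets $z-P_V z=x-P_V x$ and $z-P_W z=x-P_W x$, so $d(z,V)=d(x,V)$, $d(z,W)=d(x,W)$, and $d(z,M)=\|z\|=d(x,M)$; thus it suffices to treat $x\in M^\perp$. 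For such $x$ set $v:=P_V x\in V\cap M^\perp$, $w:=P_W x\in W\cap M^\perp$ (note $P_M v=P_M P_V x=P_M x=0$), together with $e:=x-v\in V^\perp$, $f:=x-w\in W^\perp$, and abbreviate $D=\|x\|$, $a=\|e\|=d(x,V)$, $b=\|f\|=d(x,W)$, $s=\|v\|$, $t=\|w\|$, so that $D^2=s^2+a^2=t^2+b^2$. Expanding $\langle e,f\rangle=\langle x-v,x-w\rangle$ and using $\langle x,v\rangle=s^2$, $\langle x,w\rangle=t^2$ yields the identity $D^2-a^2-b^2=\langle v,w\rangle-\langle e,f\rangle$.

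Next I would bound the two inner products by the angle. The first is immediate from the definition \eqref{angle-def}: as $v\in V\cap M^\perp$ and $w\in W\cap M^\perp$, one has $\langle v,w\rangle\le c\,st$. The estimate $-\langle e,f\rangle\le c\,ab$ is the crux and the main obstacle. Since $e,f$ lie in the complements, the natural tool is the invariance of the Friedrichs angle under orthogonal complementation, $\varphi(V^\perp,W^\perp)=\varphi(V,W)$ (classical; see \cite{Deutsch01}), but $e,f$ need not lie in the reduced subspaces on which \eqref{angle-def} is tested. To repair this I would set $N:=V^\perp\cap W^\perp$ and split $e=\tilde e+P_N e$, $f=\tilde f+P_N f$; one checks $P_N e=P_N x=P_N f$ and $\tilde e\in V^\perp\cap N^\perp$, $\tilde f\in W^\perp\cap N^\perp$, whence $\langle e,f\rangle=\langle\tilde e,\tilde f\rangle+\|P_N x\|^2$ and therefore $-\langle e,f\rangle\le-\langle\tilde e,\tilde f\rangle\le c\,\|\tilde e\|\,\|\tilde f\|\le c\,ab$, using the complementary‑angle bound on $(\tilde e,\tilde f)$ together with $\|\tilde e\|\le a$, $\|\tilde f\|\le b$. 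Pinning this step down cleanly, in particular the complementation invariance, is where the real work lies.

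Combining the identity with the two bounds gives $D^2-a^2-b^2\le c\,(st+ab)$. Assuming $D>0$ (else there is nothing to prove), I would substitute $a=D\sin\alpha$, $s=D\cos\alpha$ and $b=D\sin\beta$, $t=D\cos\beta$ with $\alpha,\beta\in[0,\tfrac{\pi}{2}]$; then $st+ab=D^2\cos(\alpha-\beta)$ and $D^2-a^2-b^2=D^2\cos(\alpha+\beta)\cos(\alpha-\beta)$, so after dividing by $D^2\cos(\alpha-\beta)$ the inequality collapses to $\cos(\alpha+\beta)\le c=\cos\varphi(V,W)$, i.e.\ $\alpha+\beta\ge\varphi(V,W)$. (The degenerate case $\cos(\alpha-\beta)=0$ forces $\{a,b\}=\{0,D\}$, for which $a+b=D\ge D\sin\varphi(V,W)$ directly.) It then remains to record the elementary fact that $\alpha+\beta\ge\varphi$ with all three angles in $[0,\tfrac{\pi}{2}]$ implies $\sin\alpha+\sin\beta\ge\sin\varphi$: if $\alpha+\beta\le\tfrac{\pi}{2}$ this follows from monotonicity of $\sin$ and $\sin(\alpha+\beta)\le\sin\alpha+\sin\beta$, while if $\alpha+\beta>\tfrac{\pi}{2}$ then $\sin\beta>\cos\alpha$ gives $\sin\alpha+\sin\beta>\sin\alpha+\cos\alpha\ge1\ge\sin\varphi$. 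This yields $d(x,V)+d(x,W)=a+b=D(\sin\alpha+\sin\beta)\ge D\sin\varphi(V,W)=d(x,M)\,\sin\varphi(V,W)$, as required.
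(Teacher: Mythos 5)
Your proposal is correct, but it follows a genuinely different route from the paper's proof. Both arguments start with the same reduction (replace $x$ by its projection onto $(V\cap W)^\perp$, which preserves all three distances), and both finish with the same elementary trigonometric lemma: if $\alpha+\beta\ge\varphi$ with all three angles in $[0,\tfrac{\pi}{2}]$, then $\sin\alpha+\sin\beta\ge\sin\varphi$. The difference lies in how the key inequality $\alpha+\beta\ge\varphi(V,W)$ is obtained, where $\sin\alpha = d(x,V)/\|x\|$ and $\sin\beta = d(x,W)/\|x\|$. The paper gets it geometrically: since $P_Vx\in V\cap(V\cap W)^\perp$ and $P_Wx\in W\cap(V\cap W)^\perp$, one has $\varphi(V,W)\le\varphi(P_Vx,P_Wx)\le \alpha+\beta$, where the second step is the triangle inequality for the angle metric on lines --- that triangle inequality is the paper's only black box. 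You instead derive $\cos(\alpha+\beta)\le\cos\varphi(V,W)$ algebraically from the identity $\|x\|^2-d(x,V)^2-d(x,W)^2=\langle P_Vx,P_Wx\rangle-\langle x-P_Vx,\,x-P_Wx\rangle$ plus two angle-governed Cauchy--Schwarz bounds; the first is immediate from \eqref{angle-def}, while the second requires splitting off $V^\perp\cap W^\perp$ and invoking the invariance of the Friedrichs angle under orthogonal complementation, $\varphi(V^\perp,W^\perp)=\varphi(V,W)$ --- that classical theorem (found in \cite{Deutsch01}) is your black box, as you yourself flag. The trade-off: the paper's proof is shorter and its supporting fact (angles between lines form a metric) is more elementary; your proof is purely inner-product-algebraic once complementation invariance is granted, but that invariance is a deeper result than what it replaces, so your argument is in effect heavier. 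Your handling of degenerate cases ($\|x\|=0$, $\cos(\alpha-\beta)=0$, vanishing projections) is careful and complete, arguably more explicit than the paper's.
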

\begin{proof}
Let $P_U$ denote the orthogonal projection operator onto an arbitrary closed subspace $U$ of $\sH$.
For any $x \in \sH$, let $u:=P_{(V\cap W)^\perp}x$. Noting the relation
$P_Vu = P_V(x - P_{V\cap W}x) = P_Vx - P_{V\cap W}x$, we observe that
$P_Vu \in V \cap (V\cap W)^\perp$. Similarly, we have $P_Wu \in W \cap (V\cap W)^\perp$. Hence, as a consequence of \eqref{angle-def}, we have 
$$\sin \varphi(V,W) \leq \sin \varphi(P_Vu,P_Wu) \leq \sin \varphi(u,P_Vu) + \sin \varphi (u,P_Wu),$$
where $\varphi(v,w):=\varphi(\RR v,\RR w)$ denotes the angle between the lines defined by $v$ and $w$, and satisfies the triangle inequality. 
We multiply both sides of this inequality by $\|u\|=d(x,V\cap W)$. Observing that  
$$\|u\| \sin \varphi(u,P_V u) = d(u,V)  = \|P_{V^\perp} P_{(V\cap W)^\perp}x\| = \|P_{V^\perp} x\| = d(x,V)$$
(and similarly that $\|u\| \sin \varphi (u, P_W u) = d(x,W)$) yields the desired result.
\end{proof}

\begin{remark}
In fact, for distinct closed subspaces $V$ and $W$, it can be shown that
\begin{equation}\label{ineq1}
\sin\varphi(V,W) = \inf_{\substack{x\in(V{\cap}W)^\perp\\\|x\|=1}}\d(x,V)+\d(x,W).
\end{equation}
\end{remark}

\section{Geometry and dynamics of successive relaxed projections}\label{geometry}

\subsection{Geometry of one relaxed projection}

Let $P_V$ be the orthogonal projection operator onto the closed subspace $V$ of $\sH$. As before, for any $\lambda\in[0,2]$, we define the relaxed projection of $x \in \sH$ by 
$P_{V,\lambda} x:=(1-\lambda)x+\lambda P_Vx$.
The following are elementary derivations:
\begin{enumerate}
 \item[(E1)] $x - P_{V,\lambda} x = \lambda (x - P_Vx)$ so that $\|x - P_{V,\lambda} x\| = \lambda \|x - P_Vx\|$,
 \item[(E2)] $P_{V,\lambda} x - P_Vx = (1-\lambda)(x - P_Vx) \perp V$ so that $\|P_{V,\lambda} x\|^2 = \|P_Vx\|^2 + (1-\lambda)^2\|x - P_Vx\|^2$ and
 \item[(E3)] $\|x\|^2 - \|P_{V,\lambda} x\|^2 = \lambda(2-\lambda)\|x - P_Vx\|^2$. 
\end{enumerate}

This last statement trivially implies that $P_{V,\lambda}$ is non-expansive (i.e. $\|P_{V,\lambda} x\| \leq \|x\|$ for all $x \in \sH$). But it says more: provided $\lambda \in (0,2)$, $P_{V,\lambda}$ is strictly contractive if $x$ is not near $V$. More precisely, defining the relative distance function $\theta_V: \sH\to [0,1]$ via
\begin{equation}\label{rho-def}
 \theta_V(x) := \frac{d(x,V)}{\|x\|} = \frac{\|x - P_V x\|}{\|x\|}, ~~ x \not=0,
 \mbox { and }~~ \theta_V(0):=0,
\end{equation}
we have, for any $\eps \in [0,1]$, 
\begin{equation}\label{frac-dec}
\theta_V(x) \geq\eps \iff
\|P_{V,\lambda} x\| \leq (1-\lambda(2-\lambda)\eps^2)^{1/2} \,\|x\|.
\end{equation}
Note that $\lambda(2-\lambda)\eps^2 > 0$ if and only if $\lambda \in (0,2)$ and $\eps > 0$.

The lemma below states that the relaxed projection with respect to $W$ does not increase the relative distance with respect to any subspace $V$ of $W$:
\begin{lemma}\label{prop1}
Let $V$ and $W$ be any two closed subspaces of $\sH$ such that $V \subset W$. Then for all $\lambda \in [0,2]$ and $x \in \sH$,
$$\theta_V(P_{W,\lambda}x) \leq \theta_V(x).$$
\end{lemma}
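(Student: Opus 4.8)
The plan is to exploit the orthogonal decomposition $\sH = V \oplus (W\cap V^\perp) \oplus W^\perp$ afforded by the nesting $V\subset W$. Writing $x = P_Vx + (P_Wx - P_Vx) + (x - P_Wx)$ splits $x$ into three mutually orthogonal pieces, which I would abbreviate by their squared norms $a := \|P_Vx\|^2$, $b := \|P_Wx - P_Vx\|^2$, and $c := \|x - P_Wx\|^2$. The crucial structural observation is that $V\subset W$ forces $P_V P_W = P_V$, and hence $P_V P_{W,\lambda} = P_V$; that is, relaxing the projection onto $W$ leaves the $V$-component untouched, so that $P_V(P_{W,\lambda}x) = P_V x$.

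Next I would compute $y := P_{W,\lambda}x$ in these coordinates. Using (E1)--(E2) (or directly from $y = (1-\lambda)x + \lambda P_Wx$), one sees that $y$ has the same $V$- and $(W\cap V^\perp)$-components as $x$, while its $W^\perp$-component is scaled by $(1-\lambda)$. Setting $t := (1-\lambda)^2 \in [0,1]$ (valid since $\lambda\in[0,2]$) and invoking orthogonality, the four squared norms entering the inequality become $\|y - P_Vy\|^2 = b + tc$, $\|y\|^2 = a + b + tc$, $\|x - P_Vx\|^2 = b + c$, and $\|x\|^2 = a + b + c$.

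Squaring the claimed inequality $\theta_V(y) \le \theta_V(x)$ then reduces it to the purely scalar statement
$$ \frac{b + tc}{a + b + tc} \;\le\; \frac{b + c}{a + b + c}. $$
This I would settle by noting that $s \mapsto \frac{s}{a+s} = 1 - \frac{a}{a+s}$ is nondecreasing on $[0,\infty)$ for every fixed $a\ge 0$, and that $b + tc \le b + c$ because $t\le 1$ and $c\ge 0$. Applying this monotonicity with the two arguments $s = b+tc$ and $s = b+c$ delivers the inequality at once.

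I do not expect a serious analytic obstacle here; the entire content is the structural identity $P_V P_{W,\lambda} = P_V$, which collapses the problem to a one-variable comparison. The only real care required is with the normalizations: the cases $x = 0$ and $y = 0$ must be dispatched separately via the convention $\theta_V(0) := 0$ (both are immediate, since $x = 0$ forces $y = 0$, while $y = 0$ makes the left-hand side vanish), after which both denominators are strictly positive and the final square-root step is legitimate.
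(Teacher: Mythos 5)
Your proof is correct, but it follows a genuinely different route from the paper's. You linearize the problem through the orthogonal splitting $\sH = V \oplus (W\cap V^\perp) \oplus W^\perp$, note that $P_{W,\lambda}$ acts as the identity on the first two components and as multiplication by $(1-\lambda)$ on the third, and reduce the claim to the scalar monotonicity of $s \mapsto s/(a+s)$. The paper instead reasons synthetically: it writes $y = P_{W,\lambda}x$ as a convex combination of $x$ and its mirror image $P_{W,2}x$ across $W$, checks that this reflection preserves both $\|\cdot\|$ and $d(\cdot,V)$ (the latter because $P_Wx - x$ is orthogonal to $P_Wx - P_Vx$, using $V\subset W$), deduces $d(y,V)\le d(x,V)$ from convexity of the distance function, and then converts this into the relative-distance inequality via angles: since $P_Vy=P_Vx$, one gets $\tan\varphi(y,P_Vy)\le\tan\varphi(x,P_Vx)$ and hence $\theta_V(y)=\sin\varphi(y,P_Vy)\le\sin\varphi(x,P_Vx)=\theta_V(x)$. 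The shared kernel of both arguments is the identity $P_VP_{W,\lambda}=P_V$, which you isolate explicitly and the paper uses implicitly; the proofs differ in how the non-increase of the numerator and the behavior of the normalization are controlled. Your computation is more elementary and self-contained, treats the degenerate cases ($x=0$, $y=0$, $P_Vx=0$) uniformly, and yields the exact values $d(y,V)^2=b+(1-\lambda)^2c$ and $\|y\|^2=a+b+(1-\lambda)^2c$, from which, for instance, the equality cases can be read off. The paper's proof is shorter and coordinate-free, and its reflection-plus-convexity mechanism is the standard geometric picture of relaxed projections (the parameter $\lambda$ traces the segment from $x$ to its reflection), at the mild cost of leaving the degenerate angle cases (where the tangent is infinite) to conventions.
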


\begin{proof}
Note that $y:=P_{W,\lambda} x$ is a convex combination of $x$ and $P_{W,2}x$. Since $P_{W,2}x$ is the mirror image of $x$ with respect to $W$, we have $\|P_{W,2}x\| = \|x\|$. More generally, $P_VP_{W,2}x = P_Vx$ implies
$$d(P_{W,2}x,V) = \|(P_Wx-x) +(P_Wx- P_Vx)\| = \|(P_Wx-x) - (P_Wx- P_Vx)\| =
d(x,V).$$ 
(The second equality above uses the fact that $P_Wx-x$ is orthogonal to $P_Wx-P_Vx \in W$.) Hence, by convexity, we have $d(y,V) \leq d(x,V)$. Since $P_V y = P_V x$, this implies 
$\tan \varphi(y,P_Vy) \leq \tan \varphi(x,P_Vx)$
and therefore 
$\theta_V(y) = \sin \varphi(y,P_Vy) \leq \sin \varphi(x,P_Vx) = \theta_V(x)$.
\end{proof}

Combining Proposition \ref{angle-regularity} and Lemma \ref{prop1} (where $(V,W)$ is replaced by $(V\cap W, W)$) yields the following corollary:
\begin{corollary} \label{corr-update}
Let $V$ and $W$ be any two closed subspaces of $\sH$ such that $\varphi(V,W) > 0$. Then for all $\lambda \in [0,2]$ and $x \in \sH$,
$$\theta_{V\cap W}(P_{W,\lambda}x) \leq \theta_{V\cap W}(x)
\leq \kappa(V,W) \max\Big(\theta_V(x), \theta_W(x)\Big),
$$
where $\kappa(V,W):=2/\sin \varphi(V,W)$.
\end{corollary}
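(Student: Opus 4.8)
The plan is to derive the two asserted inequalities separately, each as a direct consequence of one of the two results flagged in the statement. The left inequality is a monotonicity statement that follows from Lemma \ref{prop1}, and the right inequality is a quantitative regularity bound that follows from Proposition \ref{angle-regularity} after normalizing by $\|x\|$. Since both target results are already available, the work is essentially bookkeeping: verifying that the hypotheses of the cited lemmas apply to the chosen subspaces and handling the trivial $x=0$ case.

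For the left inequality $\theta_{V\cap W}(P_{W,\lambda}x) \leq \theta_{V\cap W}(x)$, I would apply Lemma \ref{prop1} with its subspace ``$V$'' taken to be $V\cap W$ and its subspace ``$W$'' kept as $W$. The only thing to check is the containment hypothesis of that lemma, namely that the inner subspace is contained in the one being projected onto; here $V\cap W \subset W$ holds automatically, so the lemma applies verbatim and yields the claim for every $\lambda\in[0,2]$ and every $x\in\sH$.

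For the right inequality, I would start from the conclusion of Proposition \ref{angle-regularity},
\begin{equation*}
d(x,V\cap W)\,\sin\varphi(V,W) \leq d(x,V) + d(x,W),
\end{equation*}
and, for $x\neq 0$, divide both sides by $\|x\|$. Using the definition \eqref{rho-def} of the relative distance function, the left side becomes $\theta_{V\cap W}(x)\sin\varphi(V,W)$ and the right side becomes $\theta_V(x)+\theta_W(x)$. Bounding the sum by $2\max(\theta_V(x),\theta_W(x))$ and dividing through by the positive quantity $\sin\varphi(V,W)$ (positive precisely because $\varphi(V,W)>0$) produces the stated bound with constant $\kappa(V,W)=2/\sin\varphi(V,W)$. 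The case $x=0$ is handled separately and trivially, since $\theta_U(0)=0$ for every subspace $U$ by convention, so both sides vanish.

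I do not anticipate a genuine obstacle here, as the corollary is purely a recombination of Lemma \ref{prop1} and Proposition \ref{angle-regularity}. The only points demanding care are the correct identification of subspaces in the application of Lemma \ref{prop1} (ensuring $V\cap W\subset W$ rather than the reverse), the use of $\varphi(V,W)>0$ to justify dividing by $\sin\varphi(V,W)$, and the separate treatment of $x=0$ so that the normalization step is legitimate throughout.
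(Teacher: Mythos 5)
Your proposal is correct and follows exactly the paper's argument: the paper derives the corollary by combining Lemma \ref{prop1} (with $(V,W)$ replaced by $(V\cap W, W)$, which is your left inequality) and Proposition \ref{angle-regularity} normalized by $\|x\|$ (your right inequality, with the sum bounded by twice the maximum). The only difference is that you spell out the bookkeeping --- the $x=0$ convention and the division by $\sin\varphi(V,W)>0$ --- which the paper leaves implicit.
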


\subsection{Dynamics of successive relaxed projections}

We fix an innately regular collection $\sV = (V_1,\dots,V_N)$ and define
\begin{equation}\label{kappa_sV}
\kappa_* := \kappa_*(\sV) := \max_{I, J \subset [N]} \kappa(V_I,V_J) 
\end{equation}
where $\kappa(V,W)$ is defined in Corollary \ref{corr-update}. Note that $2 \leq \kappa_* < \infty$.

Now consider any sequence $(x_n)_0^\infty$ of iterates defined by \eqref{alg}, i.e. $x_{n+1}:=P_{i_n,\lambda_n}(x_n)$, $n \geq 0$. Let $I_{-1} := \emptyset$ and 
\begin{equation}\label{I_n}
I_n := \Big \{i_k : 0 \leq k \leq n \Big \}, ~~n \geq 0. 
\end{equation}
To ease our notation, we will denote $\theta_{V_I}$ by $\theta_I$ for $I \subset [N]$, and $\theta_{V_i}$ by $\theta_i$ for $i \in [N]$, as there will be no possibility of confusion.
The following lemma will be useful in our analysis.

\begin{lemma}\label{growth-lemma}
Let $V_I$, $\kappa_*$, and $I_n$ be defined as in \eqref{V_I}, \eqref{kappa_sV}, and \eqref{I_n}, respectively. We have
\begin{equation}\label{growth-bound}
\theta_{I_n}(x_{n+1}) \leq \kappa_*^{|I_n|} \max_{0 \leq k \leq n}
\theta_{i_k}(x_k).
\end{equation}
\end{lemma}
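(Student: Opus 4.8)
The plan is to argue by induction on $n$, using the two monotonicity estimates already established: Lemma~\ref{prop1} (the relative distance to a subspace does not increase under relaxed projection onto a \emph{larger} subspace) and Corollary~\ref{corr-update} (which simultaneously controls the relative distance to an intersection and introduces a factor $\kappa(\cdot,\cdot)\le\kappa_*$). The key organizing observation is that the exponent $|I_n|$ in the bound counts exactly the number of \emph{distinct} indices used so far, so a factor of $\kappa_*$ should be spent only on those steps where $i_n$ is a genuinely new index.

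Concretely, I would carry the induction hypothesis $\theta_{I_{n-1}}(x_n)\le\kappa_*^{|I_{n-1}|}\max_{0\le k\le n-1}\theta_{i_k}(x_k)$ and split into two cases according to whether $i_n$ enlarges $I_{n-1}$. If $i_n\in I_{n-1}$, then $I_n=I_{n-1}$ and $V_{I_n}\subset V_{i_n}$; since $x_{n+1}=P_{V_{i_n},\lambda_n}x_n$ is a relaxed projection onto the superspace $V_{i_n}\supset V_{I_n}$, Lemma~\ref{prop1} gives $\theta_{I_n}(x_{n+1})\le\theta_{I_n}(x_n)=\theta_{I_{n-1}}(x_n)$, and the induction hypothesis closes this case with no increase of the exponent. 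If $i_n\notin I_{n-1}$, then $|I_n|=|I_{n-1}|+1$ and $V_{I_n}=V_{I_{n-1}}\cap V_{i_n}$; innate regularity (via \eqref{innate_2}) guarantees $\varphi(V_{I_{n-1}},V_{i_n})>0$, so Corollary~\ref{corr-update} with $(V,W)=(V_{I_{n-1}},V_{i_n})$ yields
$$\theta_{I_n}(x_{n+1})\le\kappa(V_{I_{n-1}},V_{i_n})\,\max\bigl(\theta_{I_{n-1}}(x_n),\theta_{i_n}(x_n)\bigr)\le\kappa_*\max\bigl(\theta_{I_{n-1}}(x_n),\theta_{i_n}(x_n)\bigr).$$
Bounding $\theta_{I_{n-1}}(x_n)$ by the induction hypothesis and $\theta_{i_n}(x_n)$ trivially by $\max_{0\le k\le n}\theta_{i_k}(x_k)$ (and using $\kappa_*^{|I_{n-1}|}\ge1$) produces the single extra factor $\kappa_*$ that matches the increment $|I_n|=|I_{n-1}|+1$.

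For the base case $n=0$ I would invoke the convention $I_{-1}=\emptyset$, $V_\emptyset=\sH$, so that $\theta_{I_{-1}}\equiv0$; then $n=0$ is simply an instance of the second case, with $\kappa(V_\emptyset,V_{i_0})=\kappa(\sH,V_{i_0})=2\le\kappa_*$ (as $\sH\supset V_{i_0}$ are nested and hence at angle $\pi/2$), giving $\theta_{I_0}(x_1)\le\kappa_*\,\theta_{i_0}(x_0)$. The only step requiring care is the applicability of Corollary~\ref{corr-update} at each new-index step: this is exactly where innate regularity is used, ensuring $\varphi(V_{I_{n-1}},V_{i_n})>0$ so that $\kappa(V_{I_{n-1}},V_{i_n})$ is finite and bounded by $\kappa_*$. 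I do not anticipate a genuine analytic obstacle beyond correctly matching the bookkeeping of the exponent to the count of distinct indices; the two geometric lemmas do all the real work, and the essential insight is simply that the natural induction variable is $|I_n|$ rather than $n$, with a factor of $\kappa_*$ incurred precisely at index-introduction steps.
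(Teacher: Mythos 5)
Your proposal is correct and follows essentially the same route as the paper's proof: induction on $n$ with a case split on whether $i_n \in I_{n-1}$, using Lemma~\ref{prop1} (equivalently, the first inequality of Corollary~\ref{corr-update}) when the index repeats and the full Corollary~\ref{corr-update} with a factor $\kappa_*$ when a new index appears, so that the exponent tracks $|I_n|$ exactly as in the paper. Your handling of the base case via $V_\emptyset = \sH$ and $\kappa(\sH,V_{i_0})=2$ is a slightly more explicit version of the paper's use of $\theta_{I_{-1}}(x_0)=0$, but the argument is the same.
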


\begin{proof}
We begin by applying Corollary \ref{corr-update} for $V=V_{I_{n-1}}$, $W = V_{i_n}$, 
$\lambda = \lambda_n$, $x=x_n$. Note that $V_{I_{n-1}} \cap V_{i_n} = V_{I_{n}}$. Note also that $i_n \in I_{n-1}$ implies $I_n = I_{n-1}$. Hence,
\begin{equation}\label{one-step}
\theta_{I_{n}}(x_{n+1}) \leq
\left\{
\begin{array}{ll}
  \theta_{I_{n-1}}(x_n), & \mbox{ if } i_n \in I_{n-1}, \\
  \kappa_* \max(\theta_{I_{n-1}}(x_n),\theta_{i_n}(x_n))_, & \mbox{ regardless.}
\end{array}
\right.
\end{equation}
We can now prove \eqref{growth-bound} by induction. Since $\theta_{I_{-1}}(x_0) = \theta_H(x_0) = 0$, the bound \eqref{one-step} yields $\theta_{I_0}(x_1) \leq \kappa_* \theta_{i_0}(x_0)$. With $|I_0|=1$, the statement \eqref{growth-bound} for $n=0$ follows.

For the induction step, we assume
$$
\theta_{I_{n-1}}(x_{n}) \leq \kappa_*^{|I_{n-1}|} \max_{0 \leq k \leq n-1}
\theta_{i_k}(x_k)
$$
and inject this bound into \eqref{one-step}. The two cases are as follows:
\begin{itemize}
 \item If $i_n \in I_{n-1}$, then $|I_n| = |I_{n-1}|$, so using the first bound in \eqref{one-step} we get 
$$\theta_{I_{n}}(x_{n+1}) \leq \theta_{I_{n-1}}(x_n) \leq
\kappa_*^{|I_{n-1}|} \max_{0 \leq k \leq n-1}
\theta_{i_k}(x_k) \leq \kappa_*^{|I_{n}|} \max_{0 \leq k \leq n}
\theta_{i_k}(x_k).
$$
\item 
If $i_n \not\in I_{n-1}$, then $|I_n| = |I_{n-1}|+1$, so using  
the second bound in \eqref{one-step} we get 
$$\theta_{I_{n}}(x_{n+1}) \leq \kappa_* \max \Big(
\kappa_*^{|I_{n-1}|} \max_{0 \leq k \leq n-1} \theta_{i_k}(x_k) ,
\kappa_*^{|I_{n-1}|} \theta_{i_n}(x_n) \Big)
\leq \kappa_*^{|I_{n}|} \max_{0 \leq k \leq n}
\theta_{i_k}(x_k).
$$
\end{itemize}
This completes the induction step and the proof.
\end{proof}

Let us make two observations:
\begin{observation}\label{obs1}
$\|x_k - x_{k+1}\| = \|x_k - P_{i_k,\lambda_k}x_k\| = \lambda_k \|x_k - P_{i_k} x_k\| = 
\lambda_k \theta_{i_k}(x_k) \|x_k\|$.
\end{observation}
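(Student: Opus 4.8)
The plan is to verify the three equalities in turn, each being a direct unfolding of a definition or elementary identity already established above. First, the leftmost equality $\|x_k - x_{k+1}\| = \|x_k - P_{i_k,\lambda_k}x_k\|$ is nothing more than the substitution of the defining relation $x_{k+1} = P_{i_k,\lambda_k}(x_k)$ from the iteration \eqref{alg}.

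For the middle equality, I would invoke property (E1) with $V = V_{i_k}$, relaxation parameter $\lambda = \lambda_k$, and point $x = x_k$. That property asserts $x_k - P_{V_{i_k},\lambda_k}x_k = \lambda_k(x_k - P_{i_k}x_k)$; since $\lambda_k \geq 0$, taking norms and pulling out the nonnegative scalar yields $\|x_k - P_{i_k,\lambda_k}x_k\| = \lambda_k\|x_k - P_{i_k}x_k\|$.

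For the rightmost equality, I would unfold the definition of the relative distance function $\theta_V$ given in \eqref{rho-def}. When $x_k \neq 0$ we have $\theta_{i_k}(x_k) = \|x_k - P_{i_k}x_k\|/\|x_k\|$ by definition, whence $\theta_{i_k}(x_k)\,\|x_k\| = \|x_k - P_{i_k}x_k\|$, and multiplying through by $\lambda_k$ gives the claim. The only point requiring separate mention is the degenerate case $x_k = 0$: here $\theta_{i_k}(0) = 0$ by the convention in \eqref{rho-def}, and both $\|x_k - P_{i_k}x_k\|$ and $\theta_{i_k}(x_k)\,\|x_k\|$ vanish, so the identity holds trivially. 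There is no genuine obstacle in this observation — the chain is a direct composition of the definition of the iterates, identity (E1), and the definition of $\theta_{i_k}$ — and the only care needed is to handle $x_k = 0$ separately so that the division implicit in \eqref{rho-def} is avoided.
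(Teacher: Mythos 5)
Your proposal is correct and matches the paper exactly: the paper states this as an unproved observation precisely because it is the immediate composition of the iteration \eqref{alg}, identity (E1), and the definition \eqref{rho-def}, which is the chain you verify. Your separate handling of the degenerate case $x_k = 0$ (where $\theta_{i_k}(0) := 0$ makes both sides vanish) is a careful touch the paper leaves implicit, but it introduces no new idea.
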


\begin{observation}\label{propos0}
For all $0 \leq m \leq n$, 
$$x_{m+1}-x_0 = \sum_{k=0}^m (x_{k+1}-x_k) \in V_{i_0}^\perp + \cdots + V_{i_m}^\perp \subset
(V_{i_0} \cap \cdots \cap V_{i_m})^\perp \subset
V_{I_n}^\perp.$$
\end{observation}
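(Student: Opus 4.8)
The plan is to establish the displayed chain of memberships and inclusions from left to right, each step being elementary. First I would determine where a single displacement lives. By the elementary identity (E1), $x_k - x_{k+1} = \lambda_k(x_k - P_{i_k}x_k)$, and since $x_k - P_{i_k}x_k$ is the residual of the orthogonal projection of $x_k$ onto $V_{i_k}$, it is orthogonal to $V_{i_k}$; hence each displacement $x_{k+1}-x_k$ lies in $V_{i_k}^\perp$. Telescoping the identity $x_{m+1}-x_0 = \sum_{k=0}^m(x_{k+1}-x_k)$ then places the sum in $V_{i_0}^\perp + \cdots + V_{i_m}^\perp$, which gives the equality and first membership.

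For the first inclusion, I would invoke the general fact that for any subspaces $W_0,\dots,W_m$ one has $W_0^\perp + \cdots + W_m^\perp \subset (W_0 \cap \cdots \cap W_m)^\perp$: if $u_k \in W_k^\perp$ and $x \in \bigcap_k W_k$, then $\langle \sum_k u_k, x\rangle = \sum_k \langle u_k, x\rangle = 0$, so every element of the left-hand sum annihilates the intersection. Applying this with $W_k = V_{i_k}$ yields $V_{i_0}^\perp + \cdots + V_{i_m}^\perp \subset (V_{i_0}\cap\cdots\cap V_{i_m})^\perp$.

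For the final inclusion, I would combine the containment of index sets with the fact that complementation reverses inclusion. Since $m\leq n$, we have $\{i_0,\dots,i_m\}\subset\{i_0,\dots,i_n\}=I_n$, whence $V_{I_n}=\bigcap_{i\in I_n}V_i \subset V_{i_0}\cap\cdots\cap V_{i_m}$; passing to orthogonal complements reverses this to $(V_{i_0}\cap\cdots\cap V_{i_m})^\perp \subset V_{I_n}^\perp$, completing the chain.

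There is no genuine obstacle here: the statement is a bookkeeping assembly of three elementary facts. The only point demanding care is the direction of each relation—in particular, remembering that passing to orthogonal complements reverses containment—and noting that no closure is needed, since we assert only the forward inclusion $\sum_k W_k^\perp \subset (\bigcap_k W_k)^\perp$ rather than the identity $(\bigcap_k W_k)^\perp = \overline{\sum_k W_k^\perp}$ that holds for closed subspaces.
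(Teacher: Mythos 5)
Your proof is correct and is precisely the elementary argument the paper leaves implicit: the Observation is stated without proof, and it rests exactly on the three facts you assemble --- (E1) placing each displacement in $V_{i_k}^\perp$, the inclusion $\sum_k W_k^\perp \subset (\bigcap_k W_k)^\perp$, and the inclusion-reversal of orthogonal complementation applied to $V_{I_n} \subset V_{i_0}\cap\cdots\cap V_{i_m}$. Your closing remark that no closure of the sum is needed is also apt, since only the forward inclusion is asserted.
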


\begin{proposition}\label{propos2}
Let $n \geq 0$. Suppose, 
for some $\eps < \kappa_*^{-|I_n|}$, we have
\begin{equation}\label{mode2}
\theta_{i_k}(x_k) \leq \eps~~\mbox{ for all }~ 0 \leq k \leq n.
\end{equation}
Then either $x_0 = 0$ or else $x_0 \not\in V_{I_n}^\perp$. In particular, if $0 \not= x_0 \in V_I^\perp$ for some $I \supset I_n$, then $I \not= I_n$.
\end{proposition}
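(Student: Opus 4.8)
The plan is to argue by contraposition. Assuming simultaneously that $x_0 \neq 0$ and that $x_0 \in V_{I_n}^\perp$, I will show that hypothesis \eqref{mode2} cannot hold with $\eps < \kappa_*^{-|I_n|}$; more precisely, I will derive $\max_{0\le k\le n}\theta_{i_k}(x_k) \ge \kappa_*^{-|I_n|}$, which directly contradicts \eqref{mode2}. The two ingredients are Observation \ref{propos0}, which locates the displacement $x_{n+1}-x_0$ inside $V_{I_n}^\perp$, and Lemma \ref{growth-lemma}, which bounds $\theta_{I_n}(x_{n+1})$ from above by $\kappa_*^{|I_n|}\max_{0\le k\le n}\theta_{i_k}(x_k)$.

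First I would record that, under the standing hypothesis \eqref{mode2}, none of the iterates $x_0,\dots,x_{n+1}$ can vanish. Indeed, identity (E3), together with $\|x_k - P_{i_k}x_k\| = \theta_{i_k}(x_k)\,\|x_k\|$, gives $\|x_{k+1}\|^2 = \|x_k\|^2\bigl(1 - \lambda_k(2-\lambda_k)\,\theta_{i_k}(x_k)^2\bigr)$; since $\lambda_k(2-\lambda_k)\le 1$ and $\eps < \kappa_*^{-|I_n|}\le \kappa_*^{-1}\le \tfrac12 < 1$ (using $\kappa_*\ge 2$ and $|I_n|\ge 1$), the factor is at least $1-\eps^2 > 0$. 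Hence $x_k\neq 0$ propagates from $x_0\neq 0$ to every $k\le n+1$. Next, since $x_0\in V_{I_n}^\perp$ and Observation \ref{propos0} gives $x_{n+1}-x_0\in V_{I_n}^\perp$, I conclude $x_{n+1}\in V_{I_n}^\perp$. Because $x_{n+1}\neq 0$ and $x_{n+1}\perp V_{I_n}$, we have $P_{V_{I_n}}x_{n+1}=0$, so $\theta_{I_n}(x_{n+1}) = \|x_{n+1}\|/\|x_{n+1}\| = 1$. Substituting into Lemma \ref{growth-lemma} yields $1 = \theta_{I_n}(x_{n+1}) \le \kappa_*^{|I_n|}\max_{0\le k\le n}\theta_{i_k}(x_k)$, whence $\max_{0\le k\le n}\theta_{i_k}(x_k) \ge \kappa_*^{-|I_n|} > \eps$, contradicting \eqref{mode2}. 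This establishes the dichotomy: either $x_0=0$ or $x_0\notin V_{I_n}^\perp$. For the final assertion, suppose $0\neq x_0\in V_I^\perp$ with $I\supset I_n$; if we had $I=I_n$, then $x_0$ would be a nonzero element of $V_{I_n}^\perp$, which the dichotomy forbids under \eqref{mode2}, so necessarily $I\neq I_n$.

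The step I expect to require the most care is the exclusion of the degenerate case $x_{n+1}=0$. The equality $\theta_{I_n}(x_{n+1})=1$ is precisely what drives the contradiction, but it fails at the origin, where $\theta_V$ is defined to be $0$, so without ruling out $x_{n+1}=0$ one would only get the vacuous bound $0\le \kappa_*^{|I_n|}\eps$. Excluding this case is exactly the purpose of the norm-decrease estimate above, which uses only $\eps<1$ (guaranteed by $\kappa_*\ge 2$ and $|I_n|\ge 1$). The strict inequality $\eps < \kappa_*^{-|I_n|}$ then enters only at the very end, to turn $\max_{0\le k\le n}\theta_{i_k}(x_k)\ge \kappa_*^{-|I_n|}$ into a genuine contradiction with \eqref{mode2}.
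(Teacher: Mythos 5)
Your proof is correct and follows essentially the same route as the paper: both combine Lemma \ref{growth-lemma} (giving $\theta_{I_n}(x_{n+1}) \leq \kappa_*^{|I_n|}\eps < 1$) with Observation \ref{propos0} to rule out $x_0 \in V_{I_n}^\perp$, the only differences being that you argue by contraposition rather than directly, and you rule out the degenerate case $x_{n+1}=0$ by propagating $x_k \neq 0$ forward via (E3), whereas the paper propagates $x_{n+1}=0$ backward to $x_0=0$ via Observation \ref{obs1}; both versions hinge on the same fact $\eps < \kappa_*^{-|I_n|} \leq 1/2$.
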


\begin{proof}
Lemma \ref{growth-lemma} immediately implies $\theta_{I_n}(x_{n+1}) \leq \kappa_*^{|I_n|} \eps < 1$. If $x_{n+1} \not= 0$, then this means $d(x_{n+1},V_{I_n}) < \|x_{n+1}\|$ so that $x_{n+1} \not\in V_{I_n}^\perp$. Because $x_{n+1}-x_0 \in V_{I_n}^\perp$ due to Observation \ref{propos0}, it follows that $x_0 \not\in V_{I_n}^\perp$. 
Meanwhile, note that $\eps< 1/2$ so that $\lambda_k \theta_{i_k}(x_k) < 1$ for all $0 \leq k \leq n$. Hence, if $x_{n+1} = 0$ then Observation \ref{obs1} yields $x_n  = 0$, and therefore we  recursively obtain $x_j = 0$ for {\em all} $0 \leq j \leq n+1$. In other words, $x_0 \not=0$ implies that $x_{n+1} \not=0$. This completes the proof.
\end{proof}

\section{Proof of Theorem \ref{theo1}} \label{proof-of-theo1}

Given any innately regular collection $\sV$ in $\sH$ and $\eta \in (0,1]$, let us define
$$\eps_*:= \eps_*(\sV) := \frac{1}{2}\kappa_*^{-N}$$
where $\kappa_*$ is defined in \eqref{kappa_sV}, and
$$\beta_* := \beta_*(\sV,\eta):=\left(1-\eta(2{-}\eta)\eps_*^2\right)^{1/2}.$$
Also, for any $\gamma > 0$, let us define the increasing sequence of positive numbers  
$C_\ell:=C_\ell(\sV,\eta,\gamma)$ for $1 \leq \ell \leq N$ via
$$C_{\ell+1} := C_\ell + \frac{C_\ell+(2{-}\eta)^\gamma}{1-\beta_*^\gamma},~~~~
1 \leq l < N,~~\mbox{ where}~~C_1:=\frac{(2{-}\eta)^\gamma}{1-(1{-}\eta)^\gamma}.$$

For each $\ell=1,\dots,N$, let $\mathbf{P}(\ell)$ be the following statement:

{\em For all integers $q \geq p \geq 0$, if the control sequence $(i_k)_p^q$ takes at most $\ell$ distinct values in $[N]$ and the relaxation sequence $(\lambda_k)_p^q$ is in $[\eta,2-\eta]$, then any trajectory $(x_n)_p^{q+1}$ defined by 
$$x_{k+1} := P_{i_k,\lambda_k}(x_k),~~~k=p,\dots,q$$
satisfies
\begin{equation}\label{new-theo-bound}
\sum_{k= p}^{q}\|x_{k+1}-x_k\|^\gamma \leq C_{\ell}\,\|x_p\|^\gamma.
\end{equation}
}
 
We will prove $\mathbf{P}(\ell)$ by induction on $\ell$. 

Before we start the proof, consider the following point which is independent of $\ell$:
For any range of integers $[p,q]$ and trajectory $(x_k)_p^{q+1}$ with control sequence $(i_k)_p^q$, if we define 
\begin{equation}\label{un}
y_k:=x_k-P_{I_{p,q}} x_p,\qquad k\in[p,q{+1}],
\end{equation}
where $I_{p,q}:= \big \{i_k : k \in [p,q] \big \}$ (and 
$P_{I_{p,q}}$ is short for $P_{V_{I_{p,q}}}$), then 
\begin{itemize}
 \item[(i)] since $(y_k)$ is a translation of $(x_k)$, we have $y_{k+1}-y_k = x_{k+1}-x_k$ for all $k \in [p,q]$,
 \item[(ii)] since $P_{I_{p,q}} x_p\in V_{I_{p,q}}\subset V_{i_k}$ for all $k\in[p,q]$, we have $P_{i_k,\lambda_k}P_{I_{p,q}} x_p=P_{I_{p,q}} x_p$ so that 
\begin{equation}\label{alg-y}
y_{k+1} = P_{i_k,\lambda_k}y_k,\qquad k\in[p,q],
\end{equation}
and
\item[(iii)] since  $y_p\in V_{I_{p,q}}^\perp$, we have $y_k\in V_{I_{p,q}}^\perp$ for all $k\in[p,q{+}1]$ as a result of Observation \ref{propos0}.
\end{itemize}

We now start the proof with the base case $\ell=1$, which means that for some $i \in [N]$, we have $P_{i_k} = P_i$ for all $k \in [p,q]$ (in other words $I_{p,q} = \{i\}$).
Let $(y_k)$ be defined as in \eqref{un}. Noting that $P_{i_k} y_k = 0$ for all $k \in [p,q{+}1]$, the relation \eqref{alg-y} implies via \eqref{relaxedPi} that $y_{k+1} = (1-\lambda_k) y_k$ for all $k \in [p,q]$. Since $|1-\lambda_k|\leq 1-\eta$, it then follows that 
$$\|y_k\|\leq(1-\eta)^{k-p}\|y_p\|\leq(1-\eta)^{k-p}\|x_p\| ~~\mbox{ for all } k \in [p,q{+}1],$$
so that 
$$\|x_{k+1}-x_k\| = \|y_{k+1} - y_k\| \leq (2-\eta)\|y_k\| \leq (2-\eta)(1-\eta)^{k-p}\|x_p\|~~\mbox{ for all } k \in [p,q].$$
Summing the bound raised to the power $\gamma$ then yields \eqref{new-theo-bound}. Hence we have shown $\mathbf{P}(1)$.

For the induction step, consider any $\ell < N$ and assume that $\mathbf{P}(\ell)$ holds. The case $N=1$ is vacuous, so we may assume $N\geq 2$. 

We will deduce the truth of $\mathbf{P}(\ell{+}1)$. 
Let $(i_k)_p^q$ take at most $\ell+1$ distinct values in $[N]$, i.e. $|I_{p,q}| \leq \ell+1$. Given an associated trajectory $(x_k)_p^q$, again let $(y_k)_p^q$ be defined as in \eqref{un}. Since $y_p \in V_{I_{p,q}}^\perp$, Proposition \ref{propos2} with $\eps=\eps_*$ shows that either $y_{p} =0$ (and we are done because then $y_k = 0$ for all $k \in [p,q]$) or else $\theta_{i_k}(y_k) > \eps_*$ for some $k \in [p,q]$. In this case,
let us enumerate the set $\big \{k \in [p,q] : \theta_{i_k}(y_k) > \eps_* \big \}$ as an increasing sequence $r_1 < \cdots < r_L$. This results in a segmentation of $[p,q]$ in the form
$$
[p_0,q_0] \cup \{r_1\} \cup [p_1,q_1] \cup \cdots \cup \{r_L\} \cup [p_L,q_L]
$$
where $p_0 := p$, $q_L := q$ and for all $j=1,\dots,L$ we have $p_j := r_j + 1$ and $q_{j-1} := r_j -1$. (With this notation, we allow for the possibility that $q_j = p_j-1$ which simply means that $[p_j,q_j] = \emptyset$.) Let us also define $r_0 := p$.

For each $j=0,\dots,L$, we have $\theta_{i_k}(y_k) \leq \eps_*$ for all $k \in [p_j,q_j]$, so Proposition \ref{propos2} implies that 
either $y_{p_j}=0$ (in which case $y_k = 0$ for all $k \geq p_j$) or $I_{p_j,q_j}$ must be a {\em proper} subset of $I_{p,q}$ so that $|I_{p_j,q_j}| \leq \ell$. In this case, $\mathbf{P}(\ell)$ yields
\begin{equation}\label{bound-sub-seg-2}
\sum_{k=p_j}^{q_j} \|y_{k+1}-y_k \|^\gamma \leq C_\ell \|y_{p_j}\|^\gamma
\leq C_\ell \|y_{r_j}\|^\gamma, ~~~j=0,\dots,L.
\end{equation}
Meanwhile, \eqref{frac-dec} implies
$$\|y_{r_j+1}\| \leq \beta_* \|y_{r_j}\|, ~~~j=1,\dots,L.$$
Due to the fact that $\|y_k\|$ is a monotonically decreasing sequence, this results in the decay bound
$$ \|y_{r_j}\| \leq \beta_*^{j-1} \|y_{r_1}\| \leq \beta_*^{j-1} \|y_{r_0}\|, ~~~j=1,\dots,L,$$
so that 
$$ \|y_{r_j+1} - y_{r_j}\| \leq (2{-}\eta) \|y_{r_j}\| \leq (2{-}\eta) \beta_*^{j-1} \|y_{r_0}\|, ~~~j=1,\dots,L.$$
Combined with \eqref{bound-sub-seg-2}, we obtain 
\begin{eqnarray}
\sum_{k=p}^{q} \|y_{k+1}-y_k \|^\gamma 
& \leq &
C_\ell \sum_{j=0}^L \|y_{r_j}\|^\gamma + (2{-}\eta)^\gamma \sum_{j=1}^L \|y_{r_j}\|^\gamma \nonumber \\
& \leq & 
\Big (C_\ell + \frac{C_\ell+(2{-}\eta)^\gamma}{1-\beta_*^\gamma}\Big) \|y_p\|^\gamma
~=~ C_{\ell+1} \|y_p\|^\gamma
\end{eqnarray}
so that 
$$ \sum_{k=p}^{q} \|x_{k+1}-x_k \|^\gamma 
= \sum_{k=p}^{q} \|y_{k+1}-y_k \|^\gamma 
\leq C_{\ell+1} \|y_p\|^\gamma \leq C_{\ell+1} \|x_p\|^\gamma.
$$
Hence $\mathbf{P}(\ell{+}1)$ holds, completing the induction step. 
Since $p$ and $q$ are arbitrary, 
Theorem \ref{theo1} readily follows from $\mathbf{P}(N)$ 
with $C(\sV,\eta,\gamma):=C_N$.
\qed

\begin{remark}\label{C-bound-rem}
Since $(2{-}\eta)^\gamma \leq C_1 \leq C_\ell$, we have $C_{\ell+1} \leq 3C_\ell/(1-\beta_*^\gamma)$, and therefore 
\begin{equation}\label{C-bound-1}
 C(\sV,\eta,\gamma) := C_N \leq
\left(\frac{3}{1-\beta_*^\gamma}\right)^{N-1}\frac{(2-\eta)^\gamma}{1-(1-\eta)^\gamma}.
\end{equation} 
\end{remark}

\section{Statistics of displacements via moment bounds} \label{statistics}

While $x_n$ can be arranged to converge to its limit arbitrarily slowly, the moment bounds of Theorem \ref{theo1} place strong restrictions on the number of displacements exceeding any given value. In this section we will quantify this proposition.

Let us fix $\sV$ and $\eta\in(0,1]$ according to Theorem \ref{theo1} and consider any trajectory $(x_n)_0^\infty$ where $x_0 \not=0$. Since
$\|x_{n+1}-x_n\|\leq (2-\eta) \|x_n\| \leq (2-\eta) \|x_0\|$, let us define
$$ \delta_n := \frac{\|x_{n+1} - x_n\|}{(2{-}\eta)\|x_0\|},~~~n \in \NN,$$
as a normalized measure of the displacements. For any $\tau \in [0,1]$, let us also define
\begin{equation}\label{def-S}
S(\tau) :=  |\Lambda_\tau|, ~~\mbox{ where }~
\Lambda_\tau := \Big \{ n \in \NN: \delta_n \geq \tau \Big \}. 
\end{equation}
The next proposition shows that $S(\tau) = O(|\log \tau|^N)$ as $\tau \to 0$.
\begin{proposition}\label{S-bound-lem}
Assume the hypothesis of Theorem \ref{theo1}. Let $\beta_*$ be defined as in Section \ref{proof-of-theo1} and $S(\tau)$ as above where $x_0 \not=0$. Then for all $\tau \in (0,1]$ we have
\begin{equation} \label{cheb-bound-3}
 S(\tau) \leq 
3^{N-1} e^{N} \left(1 + \frac{\log \tau}{N \log \beta_*}\right)^{N-1} 
\left(1 + \frac{\log \tau}{N\log (1-\eta)}\right).
\end{equation}
In particular,
\begin{equation} \label{cheb-bound-4}
 S(\tau) <
9^{N} \left(1 + \frac{\log \tau}{N\log \beta_*}\right)^{N} .
\end{equation}
\end{proposition}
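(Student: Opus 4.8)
The plan is to convert the family of moment bounds from Theorem~\ref{theo1} into a tail bound for the $\delta_n$ via Markov's inequality, and then to optimize over the free exponent $\gamma$. First I would normalize: dividing the estimate of Theorem~\ref{theo1} by $\big((2{-}\eta)\|x_0\|\big)^\gamma$ and invoking the definition of $\delta_n$ gives
\[
\sum_{n=0}^\infty \delta_n^\gamma \;\leq\; \frac{C(\sV,\eta,\gamma)}{(2{-}\eta)^\gamma},
\]
into which I would substitute the explicit estimate \eqref{C-bound-1} from Remark~\ref{C-bound-rem}. The $(2{-}\eta)^\gamma$ factors cancel, so the right-hand side becomes $\big(3/(1-\beta_*^\gamma)\big)^{N-1}\big(1-(1-\eta)^\gamma\big)^{-1}$, valid for every $\gamma>0$.

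Next, since $\delta_n\geq\tau$ on $\Lambda_\tau$, Markov's inequality yields $S(\tau)\,\tau^\gamma\leq\sum_{n\in\Lambda_\tau}\delta_n^\gamma\leq\sum_n\delta_n^\gamma$, hence
\[
S(\tau)\;\leq\;\tau^{-\gamma}\Big(\frac{3}{1-\beta_*^\gamma}\Big)^{N-1}\frac{1}{1-(1-\eta)^\gamma}\qquad\text{for all }\gamma>0.
\]
The crux of the argument is the choice of $\gamma$. I would take $\gamma:=N/|\log\tau|$, which makes $\tau^{-\gamma}=e^N$ and produces exactly the anticipated $e^N$ prefactor. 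To handle the two remaining factors I would use the elementary inequality $1-e^{-s}\geq s/(1+s)$ (equivalently $1+s\leq e^{s}$), i.e. $(1-e^{-s})^{-1}\leq 1+1/s$, applied with $s=N|\log\beta_*|/|\log\tau|$ and with $s=N|\log(1{-}\eta)|/|\log\tau|$. Since $1/s$ equals $\log\tau/(N\log\beta_*)$ and $\log\tau/(N\log(1{-}\eta))$ respectively, these substitutions collapse directly into \eqref{cheb-bound-3}.

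Finally, to pass from \eqref{cheb-bound-3} to \eqref{cheb-bound-4} I would compare $\beta_*$ with $1{-}\eta$. Using $\beta_*^2=1-\eta(2{-}\eta)\eps_*^2$ and $(1{-}\eta)^2=1-\eta(2{-}\eta)$, one gets $\beta_*^2-(1{-}\eta)^2=\eta(2{-}\eta)(1-\eps_*^2)\geq 0$, because $\eps_*=\tfrac12\kappa_*^{-N}\leq\tfrac14<1$. Hence $\beta_*\geq 1{-}\eta$, so $0>\log\beta_*\geq\log(1{-}\eta)$, and therefore the final factor of \eqref{cheb-bound-3} is dominated by the $\beta_*$-factor, namely $1+\log\tau/(N\log(1{-}\eta))\leq 1+\log\tau/(N\log\beta_*)$. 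Replacing it accordingly raises the total power from $N{-}1$ to $N$, and it then remains only to absorb the constant: using $e<3$ gives $3^{N-1}e^N<3^{2N-1}<9^N$, which yields \eqref{cheb-bound-4}.

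The main obstacle is the middle step: identifying the near-optimal exponent $\gamma=N/|\log\tau|$ together with the clean inequality $(1-e^{-s})^{-1}\leq 1+1/s$ that makes every factor telescope into the stated closed form without leaving stray exponential factors behind. One should also treat the boundary case $\tau=1$ separately (where $\gamma$ would be infinite)—for instance by a limit as $\tau\to1^-$, using that $S$ is nonincreasing in $\tau$—and interpret the factor $1+\log\tau/(N\log(1{-}\eta))$ in the degenerate case $\eta=1$ as its limiting value $1$.
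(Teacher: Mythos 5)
Your proposal is correct and follows essentially the same route as the paper's proof: normalize via Theorem \ref{theo1} with the explicit constant from Remark \ref{C-bound-rem}, apply Markov's inequality, relax each factor by the inequality $(1-r^\gamma)^{-1} \leq 1 + (\gamma\log r^{-1})^{-1}$, choose $\gamma = N/\log\tau^{-1}$ so that $\tau^{-\gamma}=e^N$, and deduce \eqref{cheb-bound-4} from $\beta_* > 1-\eta$ together with $3^{N-1}e^N < 9^N$. The only (immaterial) difference is your treatment of $\tau=1$ by monotonicity of $S$ and a limit, where the paper simply evaluates its intermediate bound \eqref{cheb-bound-2} directly at $\tau=1$.
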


\begin{proof}
Let $\tau \in (0,1]$ be arbitrary. With Theorem \ref{theo1} we have
$$ (2{-}\eta)^{-\gamma} C(\sV,\eta,\gamma) \geq 
\sum_{n=0}^\infty \frac{\|x_{n+1}-x_n\|^\gamma}{(2{-}\eta)^\gamma \|x_0\|^\gamma} = \sum_{n=0}^\infty  \delta_n^\gamma \geq \sum_{n\in \Lambda_\tau}  \delta_n^\gamma \geq \tau ^\gamma S(\tau).$$
This inequality holds for all $0< \gamma < \infty$, so 
\begin{eqnarray}
 S(\tau) 
 & \leq &
 \inf_{0 < \gamma < \infty}  \tau^{-\gamma}(2{-}\eta)^{-\gamma}C(\sV,\eta,\gamma) \nonumber  \\
 & \leq & 
 \inf_{0 < \gamma < \infty} \tau^{-\gamma}
\left(\frac{3}{1-\beta_*^\gamma}\right)^{N-1}\frac{1}{1-(1{-}\eta)^\gamma}, \label{cheb-bound}
\end{eqnarray}
where in the last step we have used the explicit bound derived in Remark \ref{C-bound-rem}.

To ease our computation, we slightly relax the upper bound.
Note that, for any $0 < r < 1$,
$$ \frac{1}{1-r^\gamma} = 1 + \frac{1}{r^{-\gamma} - 1} 
\leq 1 + \frac{1}{\log r^{-\gamma}}
= 1 + \frac{1}{\gamma\log r^{-1}},$$
so that 
\begin{equation}\label{cheb-bound-2}
S(\tau) \leq 3^{N-1} \inf_{0 < \gamma < \infty}  \tau^{-\gamma}
\left(1 + \frac{1}{\gamma \log \beta_*^{-1}}\right)^{N-1} \left(1 + \frac{1}{\gamma \log (1{-}\eta)^{-1}}\right).
\end{equation}

For $\tau = 1$, we get $S(1) \leq 3^{N-1}$, hence \eqref{cheb-bound-3}. (In fact, it can be shown that $S(1) \leq 1$.)

Let us assume $\tau \in (0,1)$.
Noting that $\tau^{-\gamma}\gamma^{-N}$ is minimized at $\gamma_\tau := N/ \log \tau^{-1}$, we may set $\gamma = \gamma_\tau$ in \eqref{cheb-bound-2}. The desired bound of \eqref{cheb-bound-3} follows immediately once we observe $\tau^{-1/\log \tau^{-1}} = e$. Then \eqref{cheb-bound-4} follows from the simple relations 
$3^{N-1}e^N < 9^N$ and $ 1 > \beta_* > (1-\eta(2-\eta))^{1/2} = 1-\eta$.
\end{proof}

\begin{remark}
We note that the distinction between \eqref{cheb-bound-3} and \eqref{cheb-bound-4} may be negligible for small values of $\eta$, but in the case of no relaxation ($\eta=1$) \eqref{cheb-bound-4} carries an extra factor of $\log \tau$, and is therefore suboptimal.
\end{remark}

As an immediate application of this proposition, we will derive an explicit decay estimate for the decreasing rearrangement of $(\delta_n)_0^\infty$ which we denote by $(\delta^*_n)_0^\infty$. Recall that this is the (unique) sequence
$$ \delta^*_0 \geq  \delta^*_1 \geq \cdots $$
satisfying
$\delta^*_n = \delta_{\pi(n)}$ for some bijection $\pi:\NN \to \NN$.

\begin{theorem} \label{rearrangement-bound}
Assume the hypothesis of Proposition \ref{S-bound-lem}. Then
\begin{equation} \label{delta-star-decay}
\delta^*_n < c_* \exp(-\rho_* n^{1/N}) ~~~\mbox{ for all }~~ n\geq 0,
\end{equation}
where 
$\rho_* := \frac{N}{9} \log\beta_*^{-1} > 0 $ and $c_*:= \beta_*^{-N}$.
\end{theorem}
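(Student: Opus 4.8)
The plan is to \emph{invert} the distributional estimate of Proposition \ref{S-bound-lem} using the elementary duality between the distribution function $S$ and the decreasing rearrangement $(\delta^*_n)$. Since a rearrangement preserves the multiset of values, for every threshold $\tau$ the number of indices $m$ with $\delta^*_m \geq \tau$ equals $S(\tau)$; because $(\delta^*_m)$ is nonincreasing, this is exactly the statement
$$\delta^*_n < \tau \iff S(\tau) \leq n.$$
Hence, to establish \eqref{delta-star-decay} it suffices to show, for each $n$, that the particular threshold $\tau_n := c_*\exp(-\rho_* n^{1/N})$ satisfies $S(\tau_n) \leq n$.

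First I would dispose of the small-$n$ regime. Every displacement obeys $\|x_{m+1}-x_m\| \leq (2-\eta)\|x_0\|$, so $\delta_m \leq 1$ for all $m$ and hence $\delta^*_n \leq 1$. Consequently, whenever $\tau_n > 1$ the desired inequality $\delta^*_n < \tau_n$ holds trivially. A direct computation with $c_* = \beta_*^{-N}$ and $\rho_* = \frac{N}{9}\log\beta_*^{-1}$ shows that $\tau_n > 1$ exactly when $n < 9^N$, so this case is settled outright, and the Proposition (which requires $\tau \in (0,1]$) need only be invoked for $n \geq 9^N$, where indeed $\tau_n \in (0,1]$.

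For $n \geq 9^N$ I would substitute $\tau = \tau_n$ into the bound \eqref{cheb-bound-4}. The constants $c_*$ and $\rho_*$ are reverse-engineered precisely so that the affine-in-$\log\tau$ expression appearing there collapses cleanly: since $\log\tau_n = -N\log\beta_* - \rho_* n^{1/N}$ and $\log\beta_*^{-1} = -\log\beta_*$, one finds
$$1 + \frac{\log\tau_n}{N\log\beta_*} = \frac{\rho_* n^{1/N}}{N\log\beta_*^{-1}} = \frac{n^{1/N}}{9}.$$
Feeding this into \eqref{cheb-bound-4} gives $S(\tau_n) < 9^N (n^{1/N}/9)^N = n$, whence $S(\tau_n) \leq n$ and therefore $\delta^*_n < \tau_n$, which is exactly \eqref{delta-star-decay}.

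I do not expect a genuine obstacle here: the result is a routine inversion of the distribution-function bound, and the only real content is the algebraic matching of the constants. The points that require care are (i) getting the direction and strictness of the rearrangement-versus-distribution duality right, so that the strict bound $S(\tau_n) < n$ suffices to conclude $\delta^*_n < \tau_n$, and (ii) handling the range $n < 9^N$, where $\tau_n$ leaves $(0,1]$ and the Proposition cannot be applied directly; this is precisely why the a priori estimate $\delta^*_n \leq 1$ is needed there.
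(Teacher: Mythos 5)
Your proof is correct and is essentially the paper's argument run in the reverse direction: the paper substitutes $\tau=\delta^*_n$ into Proposition \ref{S-bound-lem}, combines it with $S(\delta^*_n)\geq n+1$, and solves the resulting inequality for $\delta^*_n$, whereas you substitute the candidate threshold $\tau_n=c_*\exp(-\rho_* n^{1/N})$ and invoke the rearrangement--distribution duality $S(\tau)\leq n \iff \delta^*_n<\tau$; the use of \eqref{cheb-bound-4}, the constants, and the algebra are identical. The only bookkeeping difference is which edge case needs separate treatment: the paper isolates $\delta^*_n=0$, while you isolate $n<9^N$, where $\tau_n>1$ and the a priori bound $\delta^*_n\leq 1$ is used instead of the Proposition.
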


\begin{proof}
The result holds trivially when $\delta^*_n = 0$, so it suffices to consider the nonzero values only. Note that 
$$ S(\delta^*_n) = \left|\Big \{ k \in \NN: \delta_k \geq \delta^*_n \Big \}\right| = \left|\Big \{ k \in \NN: \delta^*_k \geq \delta^*_n \Big \}\right|
\geq n+1 $$
so that $n < S(\delta^*_n)$ which implies, when combined with Proposition \ref{S-bound-lem},
$$ n < 9^N 
\left(1 + \frac{\log \delta^*_n}{N\log \beta_*}\right)^{N}.$$
The desired bound \eqref{delta-star-decay} then easily follows from this inequality by solving for $\delta^*_n$.
\end{proof}

\paragraph{Acknowledgment.} The authors thank Heinz Bauschke for a helpful correspondence and Halyun Jeong for an insightful inquiry which triggered a notable strengthening of our result.

\bibliographystyle{alpha}
\bibliography{reference}{}

\begin{thebibliography}{Com96}

\bibitem[AA65]{amemiya1965convergence}
I.~Amemiya and T.~Ando.
\newblock Convergence of random products of contractions in {H}ilbert space.
\newblock {\em Acta Sci. Math.(Szeged)}, 26(3-4):239--244, 1965.

\bibitem[AC89]{aharoni1989block}
Ron Aharoni and Yair Censor.
\newblock Block-iterative projection methods for parallel computation of
  solutions to convex feasibility problems.
\newblock {\em Linear Algebra and Its Applications}, 120:165--175, 1989.

\bibitem[Bau95]{bauschke1995norm}
Heinz~H. Bauschke.
\newblock A norm convergence result on random products of relaxed projections
  in {H}ilbert space.
\newblock {\em Transactions of the American Mathematical Society},
  347(4):1365--1373, 1995.

\bibitem[Bau01]{bauschke2001projection}
Heinz~H. Bauschke.
\newblock Projection algorithms: results and open problems.
\newblock In {\em Studies in Computational Mathematics}, volume~8, pages
  11--22. Elsevier, 2001.

\bibitem[BB96]{Bauschke96}
Heinz~H. Bauschke and Jonathan~M. Borwein.
\newblock On projection algorithms for solving convex feasibility problems.
\newblock {\em SIAM Rev.}, 38(3):367--426, 1996.

\bibitem[BC17]{bauschke2017convex}
Heinz~H Bauschke and Patrick~L Combettes.
\newblock Convex analysis and monotone operator theory in hilbert spaces.
\newblock 2017.

\bibitem[Ceg12]{cegielski2012iterative}
Andrzej Cegielski.
\newblock {\em Iterative methods for fixed point problems in {H}ilbert spaces},
  volume 2057.
\newblock Springer, 2012.

\bibitem[Com96]{combettes1996convex}
PL~Combettes.
\newblock The convex feasibility problem in image recovery.
\newblock In {\em Advances in imaging and electron physics}, volume~95, pages
  155--270. Elsevier, 1996.

\bibitem[Deu01]{Deutsch01}
Frank Deutsch.
\newblock {\em Best approximation in inner product spaces}, volume~7 of {\em
  CMS Books in Mathematics/Ouvrages de Math\'{e}matiques de la SMC}.
\newblock Springer-Verlag, New York, 2001.

\bibitem[Fri37]{friedrichs1937certain}
Kurt Friedrichs.
\newblock On certain inequalities and characteristic value problems for
  analytic functions and for functions of two variables.
\newblock {\em Transactions of the American Mathematical Society},
  41(3):321--364, 1937.

\bibitem[Hal62]{halperin1962product}
Israel Halperin.
\newblock The product of projection operators.
\newblock {\em Acta Sci. Math.(Szeged)}, 23(1):96--99, 1962.

\bibitem[Kac37]{Kaczmarz37}
S.~Kaczmarz.
\newblock Angen\"aherte {A}ufl\"osung von {S}ystemen linearer {G}leichungen.
\newblock {\em Bulletin de l'Acad\'emie des Sciences de Pologne}, A35:355--357,
  1937.

\bibitem[KM14]{kopecka2014product}
Eva Kopeck{\'a} and Vladim{\'\i}r M{\"u}ller.
\newblock A product of three projections.
\newblock {\em Studia Mathematica}, 2(223):175--186, 2014.

\bibitem[Kno56]{knopp1956infinite}
Konrad Knopp.
\newblock {\em Infinite sequences and series}.
\newblock Courier Corporation, 1956.

\bibitem[KP17]{kopecka2017strange}
Eva Kopeck{\'a} and Adam Paszkiewicz.
\newblock Strange products of projections.
\newblock {\em Israel Journal of Mathematics}, 219(1):271--286, 2017.

\bibitem[Opp18]{oppenheim2018angle}
Izhar Oppenheim.
\newblock Angle criteria for uniform convergence of averaged projections and
  cyclic or random products of projections.
\newblock {\em Israel Journal of Mathematics}, 223(1):343--362, 2018.

\bibitem[Pr{\'a}60]{prager1960uber}
M~Pr{\'a}ger.
\newblock Uber ein {K}onvergenzprinzip im {H}ilbertschen {R}aum.
\newblock {\em Czechoslovak Math. J}, 10(85):271--272, 1960.

\bibitem[PRZ12]{pustylnik2012convergence}
Evgeniy Pustylnik, Simeon Reich, and Alexander~J Zaslavski.
\newblock Convergence of non-periodic infinite products of orthogonal
  projections and nonexpansive operators in hilbert space.
\newblock {\em Journal of Approximation Theory}, 164(5):611--624, 2012.

\bibitem[Sak95]{sakai1995strong}
Makoto Sakai.
\newblock Strong convergence of infinite products of orthogonal projections in
  hilbert space.
\newblock {\em Applicable Analysis}, 59(1-4):109--120, 1995.

\bibitem[vN50]{von1950functional}
John von Neumann.
\newblock Functional operators. vol. ii. the geometry of orthogonal spaces,
  volume 22, annals of math studies.
\newblock {\em Studies. Princeton University Press}, 1950.

\end{thebibliography}

\end{document}